\documentclass[11pt]{amsart}
\usepackage{amssymb}
\usepackage[normalem]{ulem}
\usepackage{hyperref}

\title[The \c{S}i\c{s} Kebap Theorem]{The \c{S}i\c{s} Kebap Theorem:\\ A Generic Identification Theorem
for Groups\\ of Finite Morley Rank}

\author[A. Berkman]{Ay\c{s}e Berkman}
\address{Mathematics Department, Mimar Sinan University, Silahsor Cad. 71, Bomonti Sisli 34380, Istanbul, Turkey.}
\email{ayse.berkman@msgsu.edu.tr, ayseasliberkman@gmail.com}

\author[A. V. Borovik]{Alexandre V. Borovik }
\address{School of Mathematics,  University of Manchester, Oxford Road, Manchester M13 9PL, UK.}

\date{02 August 2026}

\email{alexandre@borovik.net}

\newtheorem{lemma}{Lemma}[section]
\newtheorem{theorem}[lemma]{Theorem}

\newtheorem{conjecture}[lemma]{Conjecture}
\newtheorem{fact}[lemma]{Assertion}

\newcommand{\acf}{algebraically closed field }

\newcommand{\fmrd}{finite Morley rank}
\newcommand{\fmr}{finite Morley rank }

\usepackage{color}
\definecolor{darkgreen}{rgb}{0,0.6,0}

\begin{document}

\begin{abstract}
This paper contains a {new} version of a final identification theorem for the
`generic' groups of finite Morley rank{, developing further the results of \cite{BB04} and \cite{bbgeneric}}. It is prepared for use in the Hrushovski Programme of classification of {infinite} simple groups of finite Morley rank.
\end{abstract}

\maketitle


\section{Introduction}

\subsection{The Hrushovski Programme}

The Hrushovski Programme is an approach to classification of simple groups of finite Morley rank based on the conjecture by Ehud Hrushovski:

\begin{conjecture}[Hrushovski circa 2003]
If $G$ is an infinite simple group of \fmr and $\sigma$ a generic automorphism of $G$, then the subgroup ${\rm Fix}(\sigma)$ of fixed points of $\sigma$ is pseudofinite.
\end{conjecture}

It is a very strong conjecture; in particular, it kills  {on the spot} the so-called `bad groups', the most notorious class of potential counterexample  \cite{Ugurlu2013}. A detailed discussion of the Hrushovski Programme and its current state can be found in a remarkable recent paper by Piotr Kowalski and P{\i}nar U\u{g}urlu Kowalski \cite{Kowalski-Ugurlu2026}. We quote it:

\begin{quotation}
Hrushovski's suggestion in the paragraph titled `Connection to the Borovik program?' at the end of \cite{Hrushovski2005} can be summarized as follows.
\begin{enumerate}
\item  Take a generic automorphism $\sigma$ of a simple group of finite Morley rank $G$.
\item  Show that the fixed-point subgroup ${\rm Fix}(\sigma)$ is a pseudofinite group. i.e.  a
model of the common theory of all finite groups. [\dots ]
\item  Show that ${\rm Fix}(\sigma) \simeq H(F)$, where $H$ is a simple algebraic group defined
over a pseudofinite field $F$.
\item  Conclude that $G \simeq H(K)$, where $K$ is an algebraically closed field extending $F$.
\end{enumerate}
Hrushovski also points out that even without achieving the crucial Item (2) above:
``counting arguments applied in finite group theory might be applied via $\mu$''.
Here, $\mu$ is a measure on ${\rm Fix}(\sigma)$ constructed in \cite[Proposition 11.1]{Hrushovski2005}  that resembles
the counting measure on a pseudofinite structure. This suggestion has not yet been
pursued.
\end{quotation}

\subsection{The main result of the paper}

A general discussion of {the theory of groups of finite Morley rank}, terminology and notation can be found in the books \cite{bn} and \cite{abc}.

A group of finite Morley rank is said to be of {\em $p'$-type}, if
it contains no infinite abelian subgroup of exponent $p$. Notice
that a simple algebraic group over an \acf $K$ is of $p'$-type if
and only if ${\rm char}\, K \ne p$.


The aim of this work is to {prove} the following version of the generic identification theorems which {appeared} in our earlier work  (\cite{BB04} and \cite{bbgeneric}) {and discuss its potential application in work on the Hrushovski Programme}.

\begin{theorem}[The \c{S}i\c{s} Kebap Theorem]
Let\/ $G$\/ be a simple  group of finite
Morley rank and\/ $D$ a maximal\/ $p$-torus  in $G$
of Pr\"{u}fer rank at least\/ $3$. Let $\alpha$ be an automorphism of the group $G$ normalising $D$. Assume that
\begin{itemize}
\item[{\rm (A)}]  Every proper connected definable subgroup of $G$ which contains $D$  and is normalised by some power $\alpha^k$ of $\alpha$ ($k$ could be different for different subgroups) is a $K$-group.

\item[{\rm (B)}] For every element\/ $x$ of order\/
$p$ in $D$, the group $C^\circ_G(x)$ is of\/ $p'$-type and
$C^\circ_G(x)=F^\circ(C^\circ_G(x))E(C^\circ_G(x))$ (in particular, $C^\circ_G(x)$ is a central product of $F^\circ(C^\circ_G(x))$
and $E(C^\circ_G(x))$).

\item[{\rm (C)}]     $\langle C^\circ_G(x) \mid x \in D, \;|x|=p\rangle = G$.

\end{itemize}
Then $G$ is a Chevalley group over an \acf of characteristic distinct
from
$p$.
\label{sis-kebap}
\end{theorem}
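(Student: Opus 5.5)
The plan is to reduce Theorem~\ref{sis-kebap} to the generic identification theorem of \cite{BB04,bbgeneric}, using $\alpha$ only to ensure that hypothesis~(A) covers every subgroup to which the $K$-group assumption must be applied.

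\emph{Step 1: components of centralisers.} For $x\in D$ of order $p$, $C^\circ_G(x)$ contains $D$. It is proper in $G$, since $x\notin Z(G)$ by simplicity of $G$. Because $\alpha$ normalises $D$ and $D[p]$ is finite, some power $\alpha^k$ fixes every element of $D[p]$, and hence normalises every $C^\circ_G(x)$. By (A), each $C^\circ_G(x)$ is therefore a $K$-group. Combined with (B), $E(C^\circ_G(x))$ is a central product of quasisimple algebraic groups over algebraically closed fields of characteristic $\ne p$, since $p'$-type excludes characteristic $p$. Moreover, $D$ is a maximal $p$-torus of $C^\circ_G(x)$. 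Consequently $D$ is the product of $D\cap F^\circ(C^\circ_G(x))$ and the maximal $p$-tori of the components.

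\emph{Step 2: the root system.} Following \cite{BB04}, define the \emph{root subgroups}. These are the minimal nontrivial $D$-invariant unipotent-like subgroups of the components: the $D$-root subgroups $U_\beta$ of the algebraic groups $E(C^\circ_G(x))$, with roots $\beta$ taken as characters of $D$, i.e.\ homomorphisms to a Prüfer group. The central task is to show that the union $\Phi$ of these local root sets, over all $x\in D[p]$, forms a reduced irreducible root system in the $\mathbb{Q}$-span of ${\rm Hom}(D,\mathbb{Z}_{p^\infty})$, and that $U_\beta$ is independent of the choice of $x$ with $\beta|_{\langle x\rangle}=1$. The Weyl group is obtained as $N_G(D)/C_G(D)$, acting on $\Phi$. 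The local reflections of rank-one subgroups $\langle U_\beta,U_{-\beta}\rangle\cong (P)SL_2$ generate it. Prüfer rank $\ge 3$ ensures that any two roots are simultaneously centralised by some element of order $p$, since the kernel of two characters on $D[p]$ is nontrivial when the rank is at least $3$. Rank-two configurations $\langle U_\beta,U_\gamma,U_{-\beta},U_{-\gamma}\rangle$ therefore lie inside a single $C^\circ_G(x)$, which gives the Chevalley commutator relations. Here the $\alpha^k$-invariance is used again: subgroups generated by root groups that contain $D$ and are invariant under a suitable power of $\alpha$ are $K$-groups by (A). This makes it possible to identify, for instance, $C^\circ_G(D)$ and the rank-one and rank-two subgroups, even when these are not themselves centralisers of elements.

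\emph{Step 3: Curtis--Tits.} Hypothesis (C) shows that $G$ is generated by the $U_\beta$ together with $C^\circ_G(D)$. The local rank-one and rank-two subgroups then form an amalgam of Chevalley type. A Curtis--Tits / Steinberg presentation theorem, as used in \cite{BB04}, yields a surjection from the corresponding Chevalley group onto $G$. Simplicity of $G$ then gives the isomorphism, with the field definable and algebraically closed of characteristic $\ne p$.

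The main obstacle is expected to be the consistency part of Step~2. This requires showing that root subgroups and field structures obtained from different $C^\circ_G(x)$ agree, and in particular that $\Phi$ is connected, i.e.\ irreducible. Irreducibility should follow from the simplicity of $G$ together with (C). The first thing to try is the rank-$\ge 3$ overlap argument: any two elements of order $p$ in $D$ lie in a common $p$-torus of Prüfer rank $2$ whose centraliser meets both components, so the components can be chained together.
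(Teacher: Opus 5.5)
Your overall plan is the paper's: pass to powers of $\alpha$ so that hypothesis (A) becomes applicable, then rerun the argument of \cite{BB04,bbgeneric}. Your Step~1 is exactly the paper's first trimming. Using unipotent $D$-root subgroups with a Curtis--Tits/Steinberg presentation, instead of the root $SL_2$-subgroups $\Sigma$ with Lyons's theorem, is only a change of packaging.

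The gap is in how $\alpha$ is used after Step~1. In Step~2 you say that subgroups generated by root groups ``that contain $D$ and are invariant under a suitable power of $\alpha$'' are $K$-groups. That is just (A) restated: you never show that the subgroups you actually need are normalised by any power of $\alpha$. Step~1 gives (A) for each $C^\circ_G(x)$, hence for anything inside a single $C^\circ_G(x)$, and $C^\circ_G(D)$ is $\alpha$-invariant anyway. It does not cover the subgroups of the \cite{BB04} argument that are generated by root subgroups from different centralisers, or that involve representatives of elements of $W=N_G(D)/C_G(D)$ (such as $\langle H,L,r_L\rangle$). This verification is precisely what distinguishes Theorem~\ref{sis-kebap} from Assertion~\ref{sis-kebap2011}, so it cannot be left implicit.

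The missing idea is to iterate your Step~1 trick, as follows.
\begin{enumerate}
\item After Step~1, $\alpha$ normalises every $E_x$, so it permutes the finitely many components of each $E_x$. It therefore permutes the finitely many root subgroups attached to $D$ in them, and it acts on the finite group $W$.
\item Replacing $\alpha$ by a further power, you may assume it fixes every component, every root subgroup (equivalently every member of $\Sigma$) and every element of $W$. These are the paper's second, third and fourth trimmings.
\item Every subgroup $M$ generated by root subgroups is then $\alpha$-invariant and normalised by $D$. To apply (A), adjoin the definable closure $T$ of $D$, not $D$ itself, since $\langle D,\dots\rangle$ need not be definable.
\item If $M$ is proper, then $MT$ is proper, since otherwise $M\trianglelefteq G$. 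Hence $MT$, and so $M$, are $K$-groups (Lemma~\ref{normalizedbyD}).
\end{enumerate}

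A smaller point concerns your final step. Curtis--Tits gives a surjection onto $\langle U_\beta\rangle$, not onto $G$. You must also check that $C^\circ_G(D)$ normalises each $U_\beta$, so that $\langle U_\beta\rangle$ is normal in $G=\langle U_\beta, C^\circ_G(D)\rangle$ and hence equals $G$.
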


Here, the {\it Fitting subgroup} $F(G)$ is the subgroup of $G$ generated by normal
nilpotent subgroups of $G$, and  $E(G)$ stands for
 the connected component of the product of all
components of $G$; that is the product of all
 quasi-simple subnormal subgroups.

The proof of Theorem~\ref{sis-kebap} is {done} in
Section~\ref{sec:sis-kebap}. Other relevant definitions are given in the same section.

{The previous versions of Theorem~\ref{sis-kebap} are Assertions \ref{sis-kebap2004} and \ref{sis-kebap2011} in Section \ref{sec:history} `Historical remarks'. Potential uses of Theorem~\ref{sis-kebap} are discussed in Sections \ref{how-will-be-used}  and \ref{step-by-step}.}

\subsection{How Theorem~\ref{sis-kebap} could be used.} \label{how-will-be-used}

We believe it could be used at the final stage of projects aimed at  proving theorems of the following kind.
\begin{quote}
\textbf{The Target Theorem.} A  simple group $G$ of finite Morley rank and odd type admitting an automorphism $\alpha$ with a particular property $\mathcal{A}$  is a Chevalley group over an algebraically closed field.
\end{quote}
We intentionally do not specify what  {property $\mathcal{A}$ is} -- it is likely that a few different versions will appear in further work on the Hrushovski Programme. What is highly desirable  {is} that this property $\mathcal{A}$ is \emph{scalable}:
\begin{quote}
\textbf{Scalability.} If $\alpha$ satisfies $\mathcal{A}$ then so do all $\alpha^n$ for $n \in \mathbb{N}$.
\end{quote}

A stronger version of scalability could be even better.

\begin{quote}
\textbf{Strong scalability.} If $H$ is a definable subgroup of $G$ invariant under the action of $\alpha^n$ for some $n \in \mathbb{N}$ then the restriction of $\alpha^n$ on $H$ also has property $\mathcal{A}$.
\end{quote}

Then, if we use induction on the Morley rank of $G$, we
	get the following step-by-step decomposition. It is useful to list steps in the reversed order, from the target to the starting point.

Notice that the property $\mathcal{A}$ in the statement and proof of Theorem~\ref{sis-kebap} became

\medskip
\begin{quote}
\textbf{Generic $\mathcal{A}$}. If $H$ is a proper  {connected} definable subgroup of $G$ which contains  $D$ and  {is} invariant under the action of $\alpha^n$ for some $n \in \mathbb{N}$ then  $H$ is a $K$-group.
\end{quote}

Generic $\mathcal{A}$ is used in {Steps 3--6} of a (potential) proof of the Target Theorem, when we work with groups of Pr\"{u}fer rank at least $3$. However, it is vacuous with smaller Pr\"{u}fer rank cases (that is, in Steps 0--1), where more specific forms of $\mathcal{A}$ are needed.

\subsection{Step by step} \label{step-by-step}

\begin{itemize}
\item[\textbf{Step 6.}]  {Apply} Theorem~\ref{sis-kebap} in its special case  {where} $p=2$  {at} the final stage of  {the} proof of the Target Theorem.
\end{itemize}

\medskip

But this final step should be prepared:

\medskip

\begin{itemize}
\item[\textbf{Step  5.}] Prove the key assumption (B) in Theorem~\ref{sis-kebap}:
\medskip
\begin{quote}
For every involution $x$  in $D$,
\[
C^\circ_G(x)=F^\circ(C^\circ_G(x))E(C^\circ_G(x)).
\]
\end{quote}
\end{itemize}

\medskip

In CFSG, it was called the \emph{$B$-Conjecture}. It should be doable. It looks  like rewriting  Jeff Burdges'  \cite[Theorem 5.1]{Burdges2004} and replacing the assumption that $G$ is a $K^*$-group by a weaker assumption,  Generic  $\mathcal{A}$. This is exactly the same transformation that made our result of 2004 \cite{BB04}  {first into \cite{bbgeneric}, and now into} Theorem~\ref{sis-kebap}.

\begin{fact}
Let $G$ be a simple $K^*$-group of finite  Morley rank and odd type. Then one of the following statements is true:
\begin{itemize}
\item[1.] $n(G)\leq 2$.
\item[2.] $G$ has a proper $2$-generated core.
\item[3.] $G$ satisfies the  $B$-conjecture and contains a classical involution.
\end{itemize}
\end{fact}

For a finite elementary abelian $2$-group $E$, its $2$-{\it rank}
	${m}(E)$ is the minimal number of generators of $E$. For any subgroup $H$, $m(H)$ is the maximum $2$-rank of finite elementary abelian subgroups of $H$.
Let $S$ be a Sylow 2-subgroup of $G$, then the
	{\it normal rank} $n(G)$ is the maximum $2$-rank of
  normal finite elementary abelian subgroups of
		$S$. The {\it 2-generated core} of a group is
	the definable closure of the subgroup generated  by all
	normalizers $N_G(U)$ where $U$ lies in $S$ and has $2$-rank at least $2$. We say that $G$ satisfies the $B$-{\it conjecture} if, for
	any involution $z\in G$,
	$C_G^\circ(z)=F(C_G^\circ(z)) E(C_G(z))$.
 An involution $z$ is
	called {\it classical} if its centralizer contains a
component $A$ such that $z\in Z(A)$ and $A$ is
	isomorphic to $SL_2(K)$ for an algebraically closed field
	$K$.

\begin{itemize}
\item[\textbf{Step  4.}] It becomes clear from the statement of Burdges' Theorem
that we need to get rid of groups with a proper $2$-generated core, and for that purpose rewrite the paper by Borovik, Burdges and Nesin of 2008 \cite{Borovik-Burdges-Nesin2008}. In particular, this is likely to prove assumption (C)
of Theorem ~\ref{sis-kebap}. Indeed, it could be shown that
\[
M= \langle C^\circ_G(x) \mid x \in D, \;|x|=2\rangle \ne G,
\]
then $M$ should be a proper $2$-generated core in $G$.

\end{itemize}

This should be doable.

\medskip
\begin{itemize}
	\item[\textbf{Step  3.}]
 It is likely to be done at Steps  1 and 2, but we have to
ensure that $\alpha$ leaves invariant a maximal $2$-torus in $G$.
\end{itemize}

\medskip
\begin{itemize}
\item[\textbf{Step  2.}] Prove the Target Theorem in the case  {where} the Pr\"{u}fer  {$2$-}rank of $G$ is $2$. Perhaps the hardest bit of any project of this kind.
\end{itemize}

{The Pr\"{u}fer 2-rank of a group of odd type is the  number of copies
	 of the quasicyclic group ${{\mathbb Z}}_{2^\infty}$ in direct product in any of its Sylow 2-subgroups.}

\medskip
\begin{itemize}
\item[\textbf{Step  1.}] Prove the Target Theorem in the case  {where} the Pr\"{u}fer   {$2$-}rank of $G$ is $1$. It is expected that $G$ is  {isomorphic to $\operatorname{PSL}_2(K)$, where $K$ is} an \acf of odd or $0$ characteristic.
\end{itemize}
\medskip

This is the step where property $\mathcal{A}$ should be made, and used, in its specific form.

Within the `supertight project' this was done by
Ulla Karhum\"{a}ki and  P\i nar U\u{g}urlu in \cite[Theorem 1.2]{Karhumaki-Ugurlu2020}:
\begin{fact}[Karhum\"{a}ki and  U\u{g}urlu 2020] \label{th:main} Let $G$ be an infinite simple group of finite Morley rank {of Pr\"ufer $2$-rank $1$}  admitting a supertight automorphism $\alpha$. Assume that the fixed-point subgroup $C_G(\alpha^n)$ is pseudofinite for all $n\in \mathbb{N}\setminus \{0\}$. Then $G \cong {\rm P{S}L}_2(K)$, where $K$ is an algebraically closed field {of odd or $0$ characteristic}.
\end{fact}

\medskip
\begin{itemize}
\item[\textbf{Step  0.}] The Pr\"{u}fer  {$2$-}rank of $G$ is $0$, that is, $G$ is of degenerate type. It should be somehow excluded.
\end{itemize}

\medskip
Again, this is the step where property $\mathcal{A}$ should be made, and used, in its explicit form.

This was done for an automorphism  $\alpha$ which is supertight and all centralisers $C_G(\alpha^n)$, $n \in \mathbb{N}$ are pseudofinite  by P\i nar U\u{g}urlu  in her paper of 2013 \cite[Theorem 3.1]{Ugurlu2013}, see also the last paragraph in \cite[Section 4]{Karhumaki-Ugurlu2020}.

\bigskip

 {The outline of the present paper is as follows. The proof of Theorem~\ref{sis-kebap} (which closely follows \cite{BB04} and is almost the same as \cite{bbgeneric}) is in Section \ref{sec:sis-kebap}.} Some historical observations (starting from  {the first author's} paper of 2001 \cite{berkman}) and a general discussion of work on large projects in mathematics can be found in Section \ref{sec:history}. In particular, the name \c{S}i\c{s} Kebap Theorem will be explained there.

\section{Proof of  Theorem~\ref{sis-kebap}}
\label{sec:sis-kebap}

All definitions {and facts}  related to groups of finite Morley rank in general
can be found in {\cite{abc} and \cite{bn}, to simple algebraic groups -- in \cite{cartk,hump,seitz2}.}

From now on, all groups are assumed to be of finite Morley rank.
The proof follows the proof in \cite{BB04} and \cite{bbgeneric} almost word by word.  In this section, we will point out the places where extra care is needed. In particular, we will introduce a new concept called \emph{trimming}.

The group $G$
is called
a {\it $K$-group}, if every
infinite simple definable and connected section of the group is an
algebraic group over an algebraically closed field.



The strategy is to construct the Weyl group and the root system
of $G$, and then to apply Lyons's Theorem  {\cite[Assertion~2.8]{BB04}}.

From now on, we work under the assumptions of Theorem~\ref{sis-kebap}; that is, $G$ is
a simple group of \fmrd, $D$ is a maximal $p$-torus in $G$
of Pr\"{u}fer rank $\geqslant 3$ and such that every proper definable connected subgroup normalised by some power $\alpha^k$ of $\alpha$ ($k$ could be different for different subgroups) and containing $D$ is a $K$-group. We also assume that
$C^\circ_G(x)$ is  of $p'$-type for every element $x \in D$ of
order $p$, $C^\circ_G(x)=F^\circ(C^\circ_G(x))E(C^\circ_G(x)) $
and
\[G = \langle C^\circ_G(x) \mid x \in D, \; |x|=p\rangle.\]

Denote by $T$ the definable closure of $D$ in $G$. Note $T$ is an $\alpha$-invariant definable divisible abelian subgroup.

\begin{lemma}\label{normalizedbyD} If $M$ is a proper  $\alpha$-invariant definable subgroup in $G$ normalised by $D$, then $M$ is a $K$-group.
	\end{lemma}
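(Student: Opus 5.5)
Let $M$ be a proper $\alpha$-invariant definable subgroup of $G$ normalised by $D$. The plan is to reduce to assumption (A) by passing to $H = (MT)^\circ$, where $T$ is the definable closure of $D$.

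First, $M$ is $\alpha$-invariant by hypothesis, and $T$ is $\alpha$-invariant because $\alpha$ normalises $D$. Hence $MT$ is $\alpha$-invariant. It is a subgroup because $T$ normalises $M$: the normaliser of a definable subgroup is definable and contains $D$, hence contains its definable closure $T$. So $MT$ is a definable group, and its connected component $H=(MT)^\circ$ is definable, connected, $\alpha$-invariant (case $k=1$ of (A)), and contains $T^\circ$. Since $D$ is divisible, $D\le T^\circ$, so $D \le H$.

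Second, we show $H \ne G$. Suppose $H = G$. Then $MT$ has finite index in $G$, and as $G$ is connected, $MT = G$. Because $T$ normalises $M$, $M$ is normal in $MT = G$. Simplicity of $G$ then forces $M = 1$ (since $M\neq G$), whence $G = T$ is abelian, which is absurd for an infinite simple group. So $H$ is proper, and assumption (A) applies: $H$ is a $K$-group.

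Third, we transfer the $K$-property from $H$ to $M$. The $K$-group property passes to definable subgroups, since an infinite simple definable connected section of a subgroup is a section of the ambient group; it also passes to finite extensions, since $M^\circ$ carries all connected sections. Now $M^\circ \le H$: indeed $M^\circ$ is connected and contained in $MT$, so it lies in $(MT)^\circ$. Hence $M^\circ$ is a $K$-group. Every infinite simple definable connected section of $M$ is a section of $M^\circ$ (connectedness forces the image of $M^\circ$ to be everything), so $M$ is a $K$-group.

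The main obstacle is justifying that $T$ normalises $M$ and that $D \le (MT)^\circ$. Both are standard: the first follows from the definability of normalisers, the second from the divisibility of $p$-tori. No other part of the argument is delicate.
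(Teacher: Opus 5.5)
Your proof is correct and follows the paper's argument: adjoin $T$ to $M$, use the simplicity of $G$ to see that the result is proper (otherwise $M\trianglelefteq G$), and apply (A). You are in fact more careful than the paper, which applies (A) to $MT$ directly even though (A) only covers connected subgroups; your passage to $(MT)^\circ$ and the transfer of the $K$-property from $M^\circ$ to $M$ fill that small gap.
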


\begin{proof}
	Notice that if $M$ is a proper  $\alpha$-invariant definable subgroup in $G$ normalised by $D$ then $MT$ is also a proper  $\alpha$-invariant definable subgroup of $G$ contiaining $D$; for otherwise $M$ would be normal in $G$, which contradicts the simplicity of $G$. Therefore, $MT$ and hence $M$ are  $K$-groups by assumption (A).
	\end{proof}

\subsection{Trimming of the automorphism $\alpha$}

It will be convenient to replace from time to time $\alpha$ by an appropriate power $\alpha^k$. We shall call this manipulation \emph{trimming}. It does not change the assumptions of Theorem \ref{sis-kebap}. {In short, whenever $\alpha$ fixes a finite set setwise, without loss of generality, we may assume that $\alpha$ fixes that set elementwise.}

\textbf{First trimming.} Let $r$ be the Pr\"{u}fer rank of $D$ (and of $T$) then $D$ and $T$ contain $p^r-1$ elements of order $p$. The automorphism $\alpha$ permutes these elements of order $p$ and therefore $\alpha^k$ for $k = (p^r-1)!$ fixes them all. We replace $\alpha$ by $\alpha^k$.

If $x$ is an element of order $p$ in $D$ denote $E_x = E(C^\circ_G(x)).$ Now $E_x$ is $\alpha$-invariant hence is a $K$-group by Lemma~\ref{normalizedbyD}. Each $E_x$ is a central product of finitely many quasisimple algebraic groups over algebraically closed fields (perhaps different), we shall call them components of $E_x$.

{The following lemma is Lemma 3.2 from \cite{BB04} or \cite{bbgeneric}. However, to correct a misprint, we rewrite it here. }

\begin{lemma}
The set $\mathcal{L}$ of components of subgroups $E_x$ for elements of order $p$ in $D$ is non-empty.
\end{lemma}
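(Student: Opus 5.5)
We argue by contradiction: suppose every $E_x$, for $x\in D$ of order $p$, is trivial. Assumption (B) then gives $C^\circ_G(x)=F^\circ(C^\circ_G(x))$, so each $C^\circ_G(x)$ is a connected nilpotent group. The plan is to show that all these centralisers commute with $D$. Assumption (C) then makes $D$ central in $G$, contradicting simplicity.

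\textbf{Step 1: $D$ is central in each $C^\circ_G(x)$.} Since $D$ is abelian and divisible, $D\leqslant C^\circ_G(x)$. In a connected nilpotent group of finite Morley rank, the $p$-torus $D$ is central. This is the standard fact that divisible abelian torsion subgroups of connected nilpotent groups are central, because the $p$-torsion part is $D\ast B$ with $B$ bounded and the torus part central. Hence $C^\circ_G(x)\leqslant C_G(D)$ for every $x\in D$ of order $p$.

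\textbf{Step 2: conclude.} By (C), $G=\langle C^\circ_G(x)\mid x\in D,\ |x|=p\rangle\leqslant C_G(D)$, so $D\leqslant Z(G)$. But $G$ is simple and infinite, being non-abelian since it is generated by these groups and is simple. Moreover $D\neq 1$, since its Prüfer rank is at least $3$. So $Z(G)=1$ while $D\le Z(G)$, a contradiction. One should double-check that $G$ is non-abelian. If $G$ were abelian and simple it would be finite cyclic of prime order, which cannot contain a $p$-torus of positive rank.

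\textbf{Main obstacle.} The only delicate point is Step 1, namely that a $p$-torus inside a connected nilpotent group of finite Morley rank is central. I would cite the structure theorem for connected nilpotent groups of finite Morley rank: such a group is a central product of a definable divisible subgroup and a subgroup of bounded exponent, and the divisible torsion part is central. This is available in \cite{bn} and \cite{abc}. Note that the argument does not need the $p'$-type assumption, nor that the components are algebraic; $\mathcal{L}$ being non-empty only requires that some $E_x\neq 1$, and any nontrivial $E_x$ has at least one component.
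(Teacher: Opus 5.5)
Your proof is correct and follows the same route as the paper. You assume every $E_x=1$, so that by (B) each $C^\circ_G(x)=F^\circ(C^\circ_G(x))$ centralises $D$, and then (C) forces $D\leqslant Z(G)$, contradicting simplicity. The only difference is that the paper gets the centralisation of $D$ by $F^\circ(C^\circ_G(x))$ by citing Lemma~3.1 of \cite{BB04}. You instead derive it directly: the $p$-torus $D$ lies in the connected nilpotent group $C^\circ_G(x)$, and in such a group of finite Morley rank the divisible torsion part is central.
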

\begin{proof}
If  $\mathcal{L}$ is empty then all subgroups $E_x =1$ and by condition (C) of Theorem \ref{sis-kebap} and $\langle F^\circ(C^\circ_G(x)) \mid x \in D, \;|x|=p\rangle = G$. But by { Lemma~3.1 in \cite{BB04} or \cite{bbgeneric}}, $D$ is centralised by all subgroups  $F^\circ(C^\circ_G(x))$, hence $D\leqslant Z(G)$, which contradicts the simplicity of $G$.
\end{proof}

\medskip

Obviously, $\mathcal{L}$ is finite and is invariant under the action of $\alpha$, and we can do

\medskip

\textbf{Second trimming.} The automorphism $\alpha$ leaves invariant all subgroups in $\mathcal{L}$.

\subsection{Root Subgroups}

From now on, $SL_2$ will be used instead of $SL_2({\mathbb F})$, etc.

If $L \in \mathcal{L}$ is a component it is normalised by $D$, and $D_L = (D \cap L)^\circ$ is a maximal $p$-torus  in $L$. By properties of simple algebraic groups, the Zariski closure $T_L$ of $D_L$ is a maximal (algebraic) torus in $L$. Root $SL_2$-subgroups associated with the torus $T_L$ are normalised by $D$. Denote by $\Sigma$ the set of all $SL_2$-subgroups associated that way with the $p$-torus $D$. There are finitely many of them, and $\alpha$ leaves the set $\Sigma$ invariant, so we do

\textbf{Third trimming.} The automorphism $\alpha$ leaves invariant all $SL_2$-subgroups in $\Sigma$.

They are our future root
$SL_2$-subgroups.

 The key subgroups appearing here are generated by subgroups from $\Sigma$ and are therefore definable, connected,  $\alpha$-invariant, and are normalised by $D$; and hence they are $K$-groups by Lemma~\ref{normalizedbyD}.

Now the proof follows {Lemmas 3.3 to 3.6} in \cite{BB04} and \cite{bbgeneric} almost verbatim.
{The only exception is that} one has to be careful about whether certain important subgroups are still $K$-groups in this new context as well. For example, any proper subgroup generated by elements of $\Sigma$ is a $K$-group.

\subsection{Weyl Group}

{For} $L\in\Sigma$, {let} $H_L$ stand for the maximal
algebraic torus $H_L:=C_L(D\cap L)$ in $L\cong SL_2$. Now set $H =
\langle H_L \mid L \in \Sigma \rangle$ and call it the {\em
natural torus associated with $D$}.

For any $L \in \Sigma$, $W(L):=N_L(H)H/H=N_L(H_L)/H_L$ is the
Weyl group of $SL_2$ and has order $2$; hence $W(L)$ contains a
single involution, which will be denoted by $r_L$.

Notice that $D$ is a $p$-torus, the subgroups $N_G(D)$ and
$C_G(D)$ are definable and the factor group $N_G(D)/C_G(D)$ is
finite. Set $W:=N_G(D)/C_G(D)$.

\textbf{Fourth trimming.} {Note that $\alpha$ fixes $W$, and since $W$ is finite, we may assume  by trimming that $\alpha$ fixes every element in $W$.}

This will guarantee that subgroups like $\langle H,L,r_L\rangle$ for  $L\in \Sigma$ are also $K$-groups.

{
The end of the proof is slightly unusual:
\begin{quote}
Reading Lemmas {3.7} through 3.17 in \cite{BB04} or \cite{bbgeneric}, with these remarks in mind will prove Theorem~\ref{sis-kebap}.
\end{quote}
Our aim was to make  ideas and arguments from \cite{BB04} and \cite{bbgeneric} to  work in a completely different context and this achieves the aim. \hfill $\Box$
}

\section{Historical remarks} \label{sec:history}

\subsection{The Hrushovski Programme}

The second author first heard about {the suggestions of Hrushovki that} later became known {as} the Hrushovski Programme from Gregory Cherlin in a conversation which took place on 10 July 2003 in Lyon. Here is an entry in his diary:

\begin{quote}
Greg told me that Hrushovski said that we {have the} wrong approach to groups of \fmrd. One has to take a generic automorphism of a saturated model of $G$ and study the structure formed by generic points of ${\rm Fix}(\sigma)$, it has properties close to those of  a pseudofinite structure, in particular, there are more precise measures of definable sets.
\end{quote}

\subsection{The Generic Identification Theorem}

In  {the} classification  {project of infinite simple} groups of \fmrd, it was realised relatively early   {that the} identification of `generic' simple groups was not possible by proving directly that they were algebraic groups, that is, algebraic varieties over algebraically closed fields endowed with a group structure that is compatible with its structure as an algebraic variety.

Instead, they had to be characterised as groups with a $BN$-pair (this approach was used in \cite{abc} in the case of groups of even type) or identified with Chevalley groups by means of  a properly developed analogue of the Curtis--Phan theorem from  finite group theory. Both approaches were {shown to work (under certain conditions) by the first author} in her {doctoral} thesis of 1998 \cite{berkmanthesis}.

Lyons's Theorem (Assertion~2.8 \cite{BB04}) which generalised the Curtis--Phan theorem from finite groups to Chevalley groups  over arbitrary fields provided a powerful tool  {for} `generic identification'. Its first appearance was in  {the first author's} Classical Involution Theorem of 2001 \cite{berkman}.

A compact formulation of a Generic Identification Theorem suitable for application in groups of  \fmr  of both even and odd type was given by us in 2004 in \cite{BB04}.

\begin{fact} {\rm \cite[Theorem 1.2]{BB04}} Let $G$ be a simple  $K^*$-group of finite
	Morley rank, $p$ a prime, and $D$ a maximal $p$-torus  in $G$ of Pr\"{u}fer rank at least $3$. Assume that
\begin{enumerate}
  \item For every element $x$ of order
	$p$ in $D$, the group $C^\circ_G(x)$ is of $p'$-type and
  $C^\circ_G(x)=F^\circ(C^\circ_G(x))E(C^\circ_G(x))$.
  \item $G=\langle C^\circ_G(x) \mid x \in D, \;|x|=p\rangle$.
\end{enumerate}
Then $G$ is a Chevalley group over an \acf of characteristic not $p$.
\label{sis-kebap2004}
\end{fact}
We called it {the} \emph{\c{S}i\c{s} Kebap  Theorem} because the  maximal $p$-torus $D$ played {the} role of a  skewer (\c{s}i\c{s}) with {root $\operatorname{SL_2}$-sub}groups strung on it like cubes of {meat or tofu}.

When in 2011, we started our project on primitive permutation groups of \fmr and affine type (a detailed survey of that project spread over papers  \cite{bbgeneric,BB11,bbsharp,bbnotsharp,bbsolvable} and the guiding principles of its development from the target to the current situation, from the top to bottom, can be found in a recent paper \cite{BB25}), it became clear for us, that we could not expect that the group under our investigation was a $K^*$-group. On the other hand, we knew that we could get a good control of its maximal torus. For that reason, we started our work by appropriately modifying the \c{S}i\c{s} Kebap  Theorem.

\begin{fact}[Berkman and Borovik  2011, \cite{bbgeneric}]
Let\/ $G$\/ be a simple  group of finite
Morley rank and\/ $D$ a maximal\/ $p$-torus  in $G$
of Pr\"{u}fer rank at least\/ $3$. Assume that
\begin{itemize}
\item[{\rm (A)}]  Every proper connected definable subgroup of $G$ which contains $D$ is a $K$-group.

\item[{\rm (B)}] For every element\/ $x$ of order\/
$p$ in $D$, the group $C^\circ_G(x)$ is of\/ $p'$-type and
$C^\circ_G(x)=F^\circ(C^\circ_G(x))E(C^\circ_G(x)) $.

\item[{\rm (C)}]     $\langle C^\circ_G(x) \mid x \in D, \;|x|=p\rangle = G$.

\end{itemize}
Then $G$ is a Chevalley group over an \acf of characteristic distinct
from
$p$.
\label{sis-kebap2011}
\end{fact}

As one can easily see, Theorem \ref{sis-kebap} is a direct descendant  of Assertion \ref{sis-kebap2011}, with a large chunk of its proof being reproduced \emph{verbatim}.

\subsection{An example from finite group theory}

We cannot recall any result in finite group theory where the study of a group  with a special automorphism required reusing and rewriting a large chunk of CFSG, with a single exception: a remarkable result by Paul Flavell \cite{Flavell2006} where he put the final full stop in the celebrated Hall--Higman theory.

His result has a surprisingly simple formulation:

\begin{quote}
Theorem A. Suppose that:
\begin{itemize}
\item $R$ is a group of prime order $r$ that acts on the $r'$-group $G$.
\item $V$ is a faithful irreducible $RG$-module over a field of characteristic $p$.
\item $C_V (R)$ = 0.
\end{itemize}
Then either $[G, R] = 1$ or $r$ is a Fermat prime and $[G, R]$ is a nonabelian
special $2$-group.
\end{quote}

See, for example, Section 5 `Strong embedding', and the first remark on page 373 in \cite{Flavell2006}:

\begin{quote}
(a) [the 2-transitivity of a certain action of $G$] is a special case of a result of Aschbacher and Bender. We follow  closely the proof given in \cite{gls4}. Our more restrictive hypothesis allows us to truncate the argument at an early stage.
\end{quote}

We may find ourselves in a similar situation while developing different streams in the Hrushovski Programme: being in want of reusing huge material accumulated in the (unfinished) classification of infinite simple groups of finite Morley rank while hoping that ``our more restrictive hypothesis allows us to truncate the argument at an early stage.''

Of course a proof of the pseudofinitness of the group of fixed points of a generic automorphism of a simple group of \fmr will provide a dramatic shortcut and will make this approach largely unnecessary.

\section*{Acknowledgements}

We wish to express our thanks to our colleagues Gregory Cherlin, Adrien Deloro, Ulla Karhum\"{a}ki, Piotr Kowalski,  P\i nar U\u{g}urlu Kowalski and \c{S}\"ukr\"u Yal\c{c}\i nkaya for many frutiful discussions.

\end{document}